\def\pa{\partial}
\def\i1n{i=1,\cdots,n}
\def\j1n{j=1,\cdots,n}
\def\ij1n{i,j=1,\cdots,n}
\def \i{\mathrm i}
 \numberwithin{equation}{section}
\theoremstyle{definition}
 \newtheorem{theorem}{Theorem}[section]
 \newtheorem{lemma}[theorem]{Lemma}
 \newtheorem{corollary}[theorem]{Corollary}
 \newtheorem{remark}[theorem]{Remark}
\theoremstyle{definition}
\theoremstyle{theorem}
\theoremstyle{lemma}
\newcommand{\be}{\begin{equation}}
\newcommand{\ee}{\end{equation}}
\newcommand{\beq}{\begin{equation*}}
\newcommand{\eeq}{\end{equation*}}
\begin{document}

\title{Physical space approach to wave equation bilinear estimates revisit}


\author{Sheng Wang\thanks{ Shanghai Center for Mathematical Sciences, Fudan University, Shanghai 200433, P.R. China (19110840011@fudan.edu.cn).}
\and Yi Zhou\thanks{School of Mathematical Sciences, Fudan University, Shanghai 200433, P.R. China (yizhou@fudan.edu.cn).}}

\maketitle

\begin{abstract}
	In the paper by Klainerman, Rodnianski and Tao \cite{Klainerman-Rodnianski-Tao}, they give a physical space proof to a classical result of Klainerman and Machedon \cite{Klainerman-Machedon} for the bilinear space-time estimates of null forms. In this paper, we shall give an alternative and very simple physical space proof of the 
	same bilinear estimates by applying div-curl type lemma of Zhou \cite{Zhou} and Wang and Zhou \cite{Wang-Zhou-1}, \cite{Wang-Zhou-2}. As far as we known, the later development of wave maps \cite{Sterbenz-1}, \cite{Sterbenz-2}, \cite{Tao-1}, \cite{Tao-2}, \cite{Tataru-1}, \cite{Tataru-2}, and the proof of bounded  curvature theorem \cite{Klainerman-Rodnianski-Szeftel-1}, \cite{Klainerman-Rodnianski-Szeftel-2}  rely on basic ideas of Klainerman and Machedon \cite{Klainerman-Machedon} as well as Klainerman, Rodnianski and Tao \cite{Klainerman-Rodnianski-Tao}.

\end{abstract}

\textbf{ 2010 Mathematics Subject  Classification.}

\textbf{Keywords}: Physical space approach, bilinear estimates, null form

\section{Introduction}\label{intro}
In a classical paper by Klainerman and Machedon \cite{Klainerman-Machedon}, they proved the following estimate for the null forms by purely Fourier transformation. 

\begin{align}\label{bies}
	&\|Q\left(\phi, \psi\right)\|_{L^2(\mathbb{R}^{+}\times\mathbb{R}^n)}\nonumber\\
& \lesssim \left(\|\phi_0\|_{\dot{H}^{\frac{n+1}{2}}\left(\mathbb{R}^n\right)}+\|\phi_1\|_{\dot{H}^{\frac{n-1}{2}}\left(\mathbb{R}^n\right)}\right)\cdot \left(\|\psi_0\|_{\dot{H}^{1}\left(\mathbb{R}^n\right)}+\|\psi_1\|_{L^2\left(\mathbb{R}^n\right)}\right)
\end{align}
where $Q$ is any of the following null form:
\begin{equation}
\label{main}
\left\{
\begin{aligned}
& Q_0\left(\phi, \psi\right)=\pa_t\phi\pa_t \psi- \sum\limits_{i}^{n} \pa_{x_i}\phi\pa_{x_i} \psi, \\
& Q_{\alpha\beta}\left(\phi,\psi\right)=\pa_\alpha\phi\pa_\beta\psi-\pa_\alpha\psi\pa_\beta\phi,\\
\end{aligned}
\right.
\end{equation}
 $\alpha, \beta
  = 0  , 1,  \dots n$. Here, we denote $\pa_0 := \pa_t$, $\pa_i:=\pa_{x_i},  i= 1,2 \dots n$.  And $\phi$, $\psi$ satisfies the   D' Alembertian equation:
\begin{equation}
\label{main1}
\left\{
\begin{aligned}
&\Box \phi\left(t,x\right)=0, \ t>0, \ x\in\mathbb{R}^n \\
&t=0: \ \phi=\phi_0\left(x\right), \pa_t\phi =\phi_1\left(x\right), \ x\in\mathbb{R}^n  \\
\end{aligned}
\right.
\end{equation}

\begin{equation}
\label{main2}
\left\{
\begin{aligned}
&\Box \psi\left(t,x\right)=0, \ t>0, \ x\in\mathbb{R}^n \\
&t=0: \ \psi=\psi_0\left(x\right), \pa_t\psi =\psi_1\left(x\right), \ x\in\mathbb{R}^n  \\
\end{aligned}
\right.
\end{equation}
where $\Box := \pa^2_t - \sum\limits^{n}_{i=1} \pa^2_{x_i}$.

Then in another paper by Klainerman, Rodnianski, and Tao \cite{Klainerman-Rodnianski-Tao}, they give a physical space approach to those estimates and their generalizations. The aim of this paper is to give an alternative and very simple physical space proof of  the classical wave equation bilinear estimates of Klainerman and Machedon \cite{Klainerman-Machedon} by using div-curl type lemma of Zhou \cite{Zhou} and Wang and Zhou \cite{Wang-Zhou-1}, \cite{Wang-Zhou-2}. We have only attained the limited goal of proving the bilinear estimates for the dyadic piece of the solution. Summing up the dyadic parts leads to  the bilinear estimates with a Besov loss. As far as we known, the later development of wave maps \cite{Sterbenz-1}, \cite{Sterbenz-2}, \cite{Tao-1}, \cite{Tao-2}, \cite{Tataru-1}, \cite{Tataru-2}, and the proof of bounded  curvature theorem \cite{Klainerman-Rodnianski-Szeftel-1}, \cite{Klainerman-Rodnianski-Szeftel-2} rely on basic ideas of those two papers.

Firstly, we introduce some notations to facilitate writing. 

We mark $ A \lesssim B $ to mean there exists a constant $ C > 0 $ such that $ A \leqslant C B $. 

Let $u$ satisfies D' Alembertian equation, 
\begin{equation}\label{Bu}
	\Box u =0.
\end{equation}

We define the energy density for  D' Alembertian equation:
\begin{align}
e(u):= \frac{1}{2} \left(\lvert \pa_t u \rvert^2 + \lvert \nabla u\rvert^2\right),
\end{align}
and the energy for  D' Alembertian equation: 
\begin{align}
E(u):=   \int_{\mathbb{R}^{n}} e(u) dx.
\end{align}	

In fact, if $u$ is the soulution to the Cauchy problem of D' Alembertian equation, then we know: 
\begin{align}\label{nlsh}
\frac{d E\left(u \left(t\right)\right)}{dt}=0.
\end{align}

Next, we define:
\begin{align*}
&\phi:= \sum\limits_{\mu } \phi_\mu, \\
&\psi:= \sum\limits_{\lambda } \psi_\lambda,
\end{align*}
where $\lambda$ and $\mu$ are all dyadic numbers,  be the standard homogeneous Littlewood-Paley decomposition.

Now, we give  an alternative physical space proof of the following local in time version of  bilinear estimates of Klainerman, Rodnianski and Tao \cite{Klainerman-Rodnianski-Tao} and  Klainerman, Machedon \cite{Klainerman-Machedon}:
\begin{theorem}\label{wbes}
	\begin{align}
	\|Q\left(\phi_\mu, \psi_\lambda\right)\|_{L^2([0,1]\times\mathbb{R}^n)}\lesssim \mu^{\frac{n-1}{2}}E^{1/2}\left(\phi_{\mu}\right)E^{1/2}\left(\psi_{\lambda}\right)
	\end{align}
	for $\mu \leq \lambda$ and for any of null forms $Q_0$  and $Q_{\alpha, \beta}$.
\end{theorem}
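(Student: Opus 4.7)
The strategy is to exploit the divergence-type structure that the null forms acquire on free waves, and then harvest the improvement through the div-curl type lemma of Zhou \cite{Zhou} and Wang--Zhou \cite{Wang-Zhou-1,Wang-Zhou-2}. To begin, I would record the identities
$$Q_0(\phi_\mu,\psi_\lambda) = \tfrac{1}{2}\,\Box\bigl(\phi_\mu\psi_\lambda\bigr),\qquad Q_{\alpha\beta}(\phi_\mu,\psi_\lambda) = \partial_\alpha\bigl(\phi_\mu\,\partial_\beta\psi_\lambda\bigr) - \partial_\beta\bigl(\phi_\mu\,\partial_\alpha\psi_\lambda\bigr),$$
the first of which is a direct consequence of $\Box\phi_\mu=\Box\psi_\lambda=0$ and the Leibniz rule for $\Box$, while the second uses only the commutativity of partial derivatives. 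Both expressions place the quadratic null form in the shape of a space-time divergence of a bilinear quantity in which $\phi_\mu$ appears undifferentiated and $\psi_\lambda$ carries a single derivative --- the configuration ideally suited to a duality plus integration-by-parts argument.

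Next I would bound $\|Q(\phi_\mu,\psi_\lambda)\|_{L^2}$ by $L^2$-duality: test against $g\in L^2([0,1]\times\mathbb{R}^n)$ of unit norm. To transfer the derivative out of the divergence onto a test object in spite of $g$ having only $L^2$ regularity, I would introduce an auxiliary potential $G$ solving the backward wave equation $\Box G = g$ in $[0,1]\times\mathbb{R}^n$ with zero Cauchy data at $t=1$; the standard energy inequality then gives $\|\partial G\|_{L^\infty_t L^2_x}\lesssim\|g\|_{L^2}$. Substituting $g=\Box G$ and integrating by parts in space-time, while carefully tracking the boundary contributions on the slices $t=0,1$, reduces the pairing to a trilinear expression whose essential core is
$$\int_{[0,1]\times\mathbb{R}^n} \phi_\mu\;\partial\psi_\lambda\;\partial G\;dx\,dt.$$

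At this point the div-curl lemma of Zhou and Wang--Zhou applies. The key structural observation is that the space-time vector $(\partial_t\phi_\mu,-\nabla\phi_\mu)$ is divergence-free by virtue of $\Box\phi_\mu=0$, while $(\partial_t G,\nabla G)$ is a gradient and hence curl-free; their compensated pairing, with $\psi_\lambda$ acting as the high-frequency amplitude, is better by a gain over the naive H\"older $L^\infty_t L^2_x\times L^2_t L^\infty_x$ estimate. Combined with the Bernstein inequality on the low-frequency factor $\|\phi_\mu\|_{L^\infty_x}\lesssim\mu^{n/2-1}E^{1/2}(\phi_\mu)$ and the energy controls $\|\partial\psi_\lambda\|_{L^\infty_t L^2_x}\leq E^{1/2}(\psi_\lambda)$ and $\|\partial G\|_{L^\infty_t L^2_x}\lesssim\|g\|_{L^2}$, the lemma produces
$$\Bigl|\int_{[0,1]\times\mathbb{R}^n} Q(\phi_\mu,\psi_\lambda)\,g\;dx\,dt\Bigr|\ \lesssim\ \mu^{(n-1)/2}\,E^{1/2}(\phi_\mu)\,E^{1/2}(\psi_\lambda)\,\|g\|_{L^2},$$
so that taking the supremum over $g$ completes the proof.

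The main obstacle I anticipate is the delicate bookkeeping of the integration by parts: in particular, controlling the boundary terms on the slices $t=0,1$ by the conserved energies, and matching the output of the rearrangements to the precise hypotheses of the div-curl lemma. A secondary wrinkle is that $Q_0$ and $Q_{\alpha\beta}$ require slightly different IBP schemes --- $Q_0$ uses the wave equation for both factors, whereas $Q_{\alpha\beta}$ rests only on the antisymmetry of mixed partials --- so the two cases must be argued in parallel, though they are united by the same final application of the div-curl lemma and the same Bernstein step producing the $\mu^{(n-1)/2}$ factor.
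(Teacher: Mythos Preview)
Your proposal has a genuine gap at the integration-by-parts step. Writing $Q_0=\partial^\alpha(\phi_\mu\partial_\alpha\psi_\lambda)$ (or $Q_{\alpha\beta}=\partial_\alpha(\phi_\mu\partial_\beta\psi_\lambda)-\partial_\beta(\phi_\mu\partial_\alpha\psi_\lambda)$) and pairing with $g$, moving the outer derivative requires $\partial g$, which is not available for $g\in L^2$. Replacing $g$ by $\Box G$ does not help: you would then need $\partial\Box G$, i.e.\ three derivatives on $G$, whereas the energy inequality controls only $\partial G\in L^\infty_tL^2_x$. So the claimed reduction to $\int\phi_\mu\,\partial\psi_\lambda\,\partial G$ is not justified. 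Indeed, were it valid, the crude H\"older bound $\|\phi_\mu\|_{L^\infty_{t,x}}\|\partial\psi_\lambda\|_{L^\infty_tL^2_x}\|\partial G\|_{L^1_tL^2_x}\lesssim\mu^{(n-2)/2}E^{1/2}(\phi_\mu)E^{1/2}(\psi_\lambda)\|g\|_{L^2}$ would already yield $\|Q\|_{L^2}\lesssim\mu^{(n-2)/2}E^{1/2}E^{1/2}$, which is stronger than the (sharp) target $\mu^{(n-1)/2}$ for large $\mu$ and weaker for small $\mu$ --- a clear sign the scheme is inconsistent. Separately, the ``div-curl lemma'' you invoke (an $n$-dimensional pairing between the divergence-free field $(\partial_t\phi_\mu,-\nabla\phi_\mu)$ and the gradient $(\partial_tG,\nabla G)$) is not Lemma~\ref{dcr} of this paper, which is a one-dimensional identity in $(t,x_1)$; and in any case it is unclear how a bilinear compensation between $\partial\phi_\mu$ and $\partial G$ would control a genuinely \emph{trilinear} expression with $\partial\psi_\lambda$ sitting in the middle.

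The paper's argument is structurally different and uses neither duality nor an auxiliary wave $G$. The key missing ingredient in your sketch is an \emph{angular} decomposition of the low-frequency factor: one writes $\phi_\mu=\sum_{i}\phi_{\mu,\omega_i}$ into roughly $\mu^{(n-1)/2}$ pieces, each Fourier-supported in a cap of angular aperture $\mu^{-1/2}$ about a direction $\omega_i$. After rotating $\omega_i$ to $e_1$ and writing $x=(x_1,y)$, one integrates the energy and momentum identities in $y$ to obtain two one-dimensional balance laws in $(t,x_1)$, to which Lemma~\ref{dcr} applies directly (this is Theorem~\ref{ges}). The cap localization enters twice: it forces two $y$-derivatives on $\phi_{\mu,e_1}$ to cost only one full derivative, so the inhomogeneous term $\Delta_y\phi_{\mu,e_1}$ in \eqref{bc2} is harmless; and it makes the effective $y$-frequency $\sim\mu^{1/2}$, so Bernstein in $y$ contributes $\mu^{(n-1)/4}$ per cap. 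Cauchy--Schwarz over the $\sim\mu^{(n-1)/2}$ caps then yields the final $\mu^{(n-1)/2}$. This directional decomposition, absent from your outline, is precisely where the correct power of $\mu$ originates.
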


\begin{theorem}\label{dwbes}
	\begin{align}
	\|\left(-\Delta\right)^{-1/2}Q_{i,j}\left(\phi_\mu, \psi_\lambda\right)\|_{L^2([0,1]\times\mathbb{R}^n)}\lesssim \mu^{\frac{n-3}{2}}E^{1/2}\left(\phi_{\mu}\right)E^{1/2}\left(\psi_{\lambda}\right)
	\end{align}
	for $\mu \leq \lambda$, $n\geq 3$ and for   $1 \leq i , j \leq n$.
\end{theorem}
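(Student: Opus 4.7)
\medskip

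\noindent\textbf{Proof proposal for Theorem \ref{dwbes}.} The plan is to exploit the pure spatial-divergence structure of $Q_{ij}$ in order to convert the operator $(-\Delta)^{-1/2}$ into a Riesz transform, and then to reduce the resulting bilinear quantity to an endpoint Strichartz-type estimate on the lower-frequency wave $\phi_\mu$. Concretely, using the product rule together with $\pa_i\pa_j = \pa_j\pa_i$, one has the identity
\begin{align*}
Q_{ij}(\phi,\psi) = \pa_i(\phi\,\pa_j\psi) - \pa_j(\phi\,\pa_i\psi).
\end{align*}
Since $(-\Delta)^{-1/2}\pa_k$ is (up to a sign) the Riesz transform $R_k$, which is bounded on $L^2(\mathbb{R}^n)$, this gives as a first step the reduction
\begin{align*}
\|(-\Delta)^{-1/2}Q_{ij}(\phi_\mu,\psi_\lambda)\|_{L^2_x} \lesssim \|\phi_\mu\nabla\psi_\lambda\|_{L^2_x}.
\end{align*}

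Next I would apply H\"older in mixed spacetime norms, placing $\phi_\mu$ in $L^2_t L^\infty_x$ and $\nabla\psi_\lambda$ in $L^\infty_t L^2_x$:
\begin{align*}
\|\phi_\mu\nabla\psi_\lambda\|_{L^2_{t,x}([0,1]\times\mathbb{R}^n)} \leq \|\phi_\mu\|_{L^2_t L^\infty_x}\cdot\|\nabla\psi_\lambda\|_{L^\infty_t L^2_x}.
\end{align*}
Conservation of energy \eqref{nlsh} bounds the second factor by $E^{1/2}(\psi_\lambda)$. The hypothesis $\mu\leq\lambda$ is used precisely here to decide that the low-frequency wave $\phi_\mu$ is the one that carries the $L^2_t L^\infty_x$ norm, so that the subsequent dispersive bound reads $\mu^{(n-3)/2}$ rather than $\lambda^{(n-3)/2}$. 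For the first factor I would invoke the endpoint wave Strichartz estimate
\begin{align*}
\|\phi_\mu\|_{L^2_t L^\infty_x([0,1]\times\mathbb{R}^n)} \lesssim \mu^{(n-3)/2}E^{1/2}(\phi_\mu),
\end{align*}
which in $n=3$ is the endpoint $L^2_t L^\infty_x$ directly, and in $n\geq 4$ follows from the endpoint $L^2_t L^{r_*}_x$ with $r_* = 2(n-1)/(n-3)$ combined with Bernstein on the dyadic piece $\phi_\mu$. Multiplying the three pieces together yields the asserted estimate uniformly, with no need for a case split on the relative sizes of $\mu$ and $\lambda$. The restriction $n\geq 3$ enters exactly because the $q=2$ Strichartz endpoint for the wave equation fails in dimension $n=2$.

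The main obstacle is the Strichartz-type bound $\|\phi_\mu\|_{L^2_t L^\infty_x} \lesssim \mu^{(n-3)/2}E^{1/2}(\phi_\mu)$; everything else is straightforward algebra, boundedness of Riesz transforms, and H\"older. A naive spatial Bernstein gives only $\|\phi_\mu\|_{L^\infty_{t,x}}\lesssim \mu^{(n-2)/2}E^{1/2}(\phi_\mu)$, which after integration over $[0,1]$ falls short by a factor of $\mu^{-1/2}$. Recovering this $\mu^{-1/2}$ gain in physical space, rather than through Fourier/dispersive machinery, is the role of the div-curl type lemma of Zhou \cite{Zhou} and Wang and Zhou \cite{Wang-Zhou-1}, \cite{Wang-Zhou-2}. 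Notably, the size of this gain matches the null-form improvement exploited in the proof of Theorem~\ref{wbes}, which is consistent with both theorems being produced inside the same physical-space framework.
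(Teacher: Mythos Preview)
Your reduction via the divergence identity $Q_{ij}(\phi,\psi)=\pa_i(\phi\,\pa_j\psi)-\pa_j(\phi\,\pa_i\psi)$ and Riesz transforms is fine, and for $n\geq 4$ the rest of your argument is correct and in fact somewhat cleaner than the paper's, since endpoint Strichartz at $(q,r)=(2,r_*)$ with $r_*=2(n-1)/(n-3)$ plus Bernstein does give $\|\phi_\mu\|_{L^2_tL^\infty_x}\lesssim \mu^{(n-3)/2}E^{1/2}(\phi_\mu)$ and no case split on $\mu$ versus $\lambda$ is needed.

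The genuine gap is $n=3$. The estimate $\|\phi_\mu\|_{L^2_tL^\infty_x([0,1]\times\mathbb{R}^3)}\lesssim E^{1/2}(\phi_\mu)$ is \emph{false}: the forbidden endpoint $(q,r)=(2,\infty)$ for the $3$D wave equation fails already for data localized to a single dyadic shell, because the Knapp-type counterexamples responsible for the logarithmic divergence live at a fixed frequency scale and concentrate angularly. No physical-space lemma can recover a false linear inequality, so your suggestion that the div--curl lemma supplies this missing $\mu^{-1/2}$ cannot work here. The deeper reason is that once you pass to $\|\phi_\mu\nabla\psi_\lambda\|_{L^2}$ you have discarded the null structure of $Q_{ij}$ entirely; what remains is a generic product, and generic products at this scaling are governed exactly by the false endpoint.

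The paper avoids this by a case split. When $\mu\ll\lambda$ the output $Q_{ij}(\phi_\mu,\psi_\lambda)$ is localized at frequency $\sim\lambda$, so $(-\Delta)^{-1/2}$ costs only $\lambda^{-1}$; one then keeps $Q_{ij}$ intact and invokes Theorem~\ref{wbes}, which is where the null structure and the div--curl machinery actually enter, yielding $\lambda^{-1}\mu^{(n-1)/2}\leq\mu^{(n-3)/2}$. When $\mu\sim\lambda$ the paper uses your same Riesz reduction but pairs the factors in $L^4_{t,x}\times L^4_{t,x}$ rather than $L^2_tL^\infty_x\times L^\infty_tL^2_x$; the $L^4_{t,x}$ Strichartz exponent is non-endpoint for every $n\geq 3$, so one gets $\|\phi_\mu\|_{L^4_{t,x}}\lesssim \mu^{(n-5)/4}E^{1/2}(\phi_\mu)$ and the arithmetic closes because $\mu\sim\lambda$.
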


\section{Div-Curl Lemma}
\begin{lemma}\label{dcr}
	Suppose that
	\begin{equation}
	\left\{
	\begin{aligned}
	&\partial_t f^{11} + \partial_x  f^{12} =G^1,\\
	& \partial_t f^{21}-\partial_x f^{22}=G^2,
	\end{aligned}
	\right.
	\end{equation}
	
	\begin{equation}
	\begin{aligned}
	f^ {11}, f^{12}, f^{21}, f^{22} \rightarrow 0, x\rightarrow \infty,
	\end{aligned}
	\end{equation}
where $\left(t,x\right) \in \mathbb{R}_{+}\times \mathbb{R}^1 $ and each $f^{i,j} \left(i,j = 1,2\right)$  is a   real-value function of $\left(t,x\right)$ .
	
	Then  we have:
	
	\begin{align}
	\int_{0}^{T}\int_{-\infty}^{+\infty} f^{11}f^{22}+f^{12}f^{21} &\lesssim
	\left(\|f^{11}\left(0\right)\|_{L^1} + \sup\limits_{0\leq t\leq T}  \|f^{11}\left(t\right)\|_{L^1} + \int_{0}^{T}\int_{-\infty}^{+\infty}\lvert G^1\rvert \right)\nonumber\\
	&\cdot\left(\|f^{21}\left(0\right)\|_{L^1} +\sup\limits_{0\leq t\leq T} \|f^{21}\left(t\right)\|_{L^1}  +\int_{0}^{T}\int_{-\infty}^{+\infty}\lvert G^2\rvert \right)\nonumber
	\end{align}
	provided that the right side is bounded.
\end{lemma}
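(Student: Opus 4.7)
The plan is to rewrite the integrand $f^{11}f^{22}+f^{12}f^{21}$ as a space--time divergence plus source corrections by introducing $x$-antiderivative potentials, then to apply the divergence theorem and close with $L^\infty$--$L^1$ duality for the potentials against $f^{21}$ and $f^{11}$.

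First I would set
\[
u(t,x) := \int_{-\infty}^{x} f^{11}(t,y)\,dy, \qquad v(t,x) := \int_{-\infty}^{x} f^{21}(t,y)\,dy,
\]
so that $\partial_x u = f^{11}$, $\partial_x v = f^{21}$, and both potentials vanish at $x=-\infty$. Differentiating in $t$, substituting the two PDEs of the lemma, and using the decay of $f^{12}, f^{22}$ at $x=-\infty$ gives
\[
f^{12} = -\partial_t u + \int_{-\infty}^{x} G^{1}(t,y)\,dy, \qquad f^{22} = \partial_t v - \int_{-\infty}^{x} G^{2}(t,y)\,dy.
\]
Substituting these into the integrand and using the identity $\partial_x u\,\partial_t v - \partial_t u\,\partial_x v = \partial_x(u\,\partial_t v) - \partial_t(u\,\partial_x v)$ yields the key algebraic rewriting
\[
f^{11}f^{22} + f^{12}f^{21} = \partial_x\bigl(u\,\partial_t v\bigr) - \partial_t\bigl(u\,\partial_x v\bigr) - f^{11}\!\int_{-\infty}^{x}\! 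G^{2}\,dy + f^{21}\!\int_{-\infty}^{x}\! G^{1}\,dy.
\]

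Integrating over $[0,T]\times\mathbb{R}$, the $\partial_t$-divergence produces the temporal boundary $\int_{\mathbb{R}}\bigl(u(T)f^{21}(T) - u(0)f^{21}(0)\bigr)dx$, while the $\partial_x$-divergence leaves only the boundary contribution at $x=+\infty$, namely $\int_0^T u(t,+\infty)\,\partial_t v(t,+\infty)\,dt = \int_0^T\bigl(\int_{\mathbb{R}} f^{11}(t,y)\,dy\bigr)\bigl(\int_{\mathbb{R}} G^{2}(t,y)\,dy\bigr)dt$; here $\partial_t v(t,+\infty) = \int_{\mathbb{R}} G^{2}$ follows by integrating the second PDE in $x$ and using the decay of $f^{22}$. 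Each of these four pieces is then estimated by the trivial pointwise inequality $\|u(t,\cdot)\|_{L^\infty_x} \leq \|f^{11}(t,\cdot)\|_{L^1_x}$ (and its analogue for $v$), together with $\|\int_{-\infty}^{x} G^{i}(t,y)\,dy\|_{L^\infty_x} \leq \|G^{i}(t,\cdot)\|_{L^1_x}$ followed by Fubini in $t$; distributing the resulting factors yields the claimed product bound.

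The only delicate point is that $u,v$ do not vanish at $x=+\infty$, so the spatial boundary term is genuinely nonzero. However, it already carries the product structure $\bigl(\int f^{11}\bigr)\bigl(\int G^{2}\bigr)$, which fits comfortably inside $\sup_t\|f^{11}(t)\|_{L^1}\cdot\|G^{2}\|_{L^1([0,T]\times\mathbb{R})}$ on the right-hand side, so no further surgery (such as a symmetrized choice of potential) is needed.
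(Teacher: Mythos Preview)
Your proof is correct and follows essentially the same strategy as the paper: introduce the $x$-antiderivative potential of $f^{11}$, rewrite the integrand as a space--time divergence plus source corrections, and close with the $L^\infty$--$L^1$ duality $\|u(t,\cdot)\|_{L^\infty}\le\|f^{11}(t,\cdot)\|_{L^1}$. The only cosmetic difference is that you introduce a second potential $v$ and use the Jacobian identity $\partial_x u\,\partial_t v-\partial_t u\,\partial_x v=\partial_x(u\,\partial_t v)-\partial_t(u\,\partial_x v)$, which produces a nonvanishing spatial boundary term at $x=+\infty$ that you then estimate; the paper instead multiplies the second equation directly by $u=\int_{-\infty}^x f^{11}$ and integrates $-\partial_x f^{22}\cdot u$ by parts, so the decay of $f^{22}$ kills that boundary contribution outright and the resulting source term is $G^2\int_{-\infty}^x f^{11}$ rather than your $-f^{11}\int_{-\infty}^x G^2$.
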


\begin{proof} 
	The details of the proof can be referred to in our previous work \cite{Wang-Zhou-2}, and are repeated here for the convenience of the reader.

	\begin{equation*}
	\partial_t \left(\int_{-\infty}^{x} f^{11} \right) +f^{12}=\int_{-\infty}^{x} G^1.
	\end{equation*}
	
	Then 
	\begin{equation}\label{dd1r}
	f^{21}\int_{-\infty}^{x} \partial_t f^{11}+f^{12}f^{21}=f^{21}\int_{-\infty}^{x}G^1,
	\end{equation}
	
	\begin{equation}\label{dd2r}
	\partial_t f^{21}\int_{-\infty}^{x} f^{11}-\partial_x f^{22}\int_{-\infty}^{x}f^{11}= G^2\int_{-\infty}^{x}f^{11}.
	\end{equation}
	
	\eqref{dd1r}+\eqref{dd2r}:
	\begin{align}
	\int_{-\infty}^{+\infty} \partial_t \left(\int_{-\infty}^{x} f^{11}f^{21}\right)+&\int_{-\infty}^{+\infty}
	\left(f^{12}f^{21}-\partial_xf^{22}\int_{-\infty}^{x}f^{11}\right)\nonumber\\ 
	&=\int_{-\infty}^{+\infty}\left(f^{21}\int_{-\infty}^{x}G^1 + G^2\int_{-\infty}^{x}f^{11}\right)\nonumber.
	\end{align}
	
	We have:
	
	\begin{align}
	\int_{0}^{T}\int_{-\infty}^{+\infty} f^{11}f^{22}+f^{12}f^{21}=& \int_{-\infty}^{+\infty} \left(\int_{-\infty}^{x} f^{11}f^{21}\right)\left(0\right)- \left(\int_{-\infty}^{x} f^{11}f^{21}\right)\left(T\right) \nonumber\\
	&+\int_{0}^{T}\int_{-\infty}^{+\infty}\left(f^{21}\int_{-\infty}^{x}G^1 + G^2\int_{-\infty}^{x}f^{11}\right)\nonumber\\
	&:= \mathcal{A}_1+\mathcal{A}_2+\mathcal{A}_3,
	\end{align}
	
	where	
	
	\begin{align}
	\lvert \mathcal{A}_1\rvert&\lesssim \|f^{11}\left(0\right)\|_{L^1}\|f^{21}\left(0\right)\|_{L^1} + \|f^{11}\left(T\right)\|_{L^1}\|f^{21}\left(T\right)\|_{L^1},\nonumber
	\end{align}

	\begin{align}
	\lvert \mathcal{A}_2\rvert&\lesssim\int_{0}^{T} \|f^{21}\left(t\right)\|_{L^1}\|G^1\left(t\right)\|_{L^1}\nonumber\\
	\lesssim&\sup\limits_{0\leq t\leq T} \| f^{21}\left(t\right)\|_{L^1}\left(\int_{0}^{T}\int_{-\infty}^{+\infty}\lvert G^1\rvert\right),\nonumber
	\end{align}

	\begin{align}
	\lvert \mathcal{A}_3\rvert&\lesssim\int_{0}^{T} \|f^{11}\left(t\right)\|_{L^1}\|G^2\left(t\right)\|_{L^1}\nonumber\\
	\lesssim&\sup\limits_{0\leq t\leq T} \| f^{11}\left(t\right)\|_{L^1}\left(\int_{0}^{T}\int_{-\infty}^{+\infty}\lvert G^2\rvert\right)\nonumber.
	\end{align}

	Based analysis above, we complete the proof. 
\end{proof}

\begin{remark}\label{neiji}
	In fact, the quantity $f^{11}f^{22}+f^{12}f^{21}$ is an ``inner product" in some sense. But by coincidence, this quantity equals to the determinant of a matrix.  We  can represent the matrix in the following form
	
	\begin{equation*}
	\mathbb{A}=
	\begin{pmatrix}
	f^{11} & -f^{12} \\
	f^{21}           & f^{22}
	\end{pmatrix}.
	\end{equation*}
	
\end{remark}

\section{Application of div-curl lemma}
In this section, we assume the functions $u$ and $v$ satisfy the D' Alembertian equation respectively, and obatin the energy conservation law and ``momentum" balence law. Combining this and  applying the div-crul lemma,  we  derive some mixed type estimates. Here, we denote $x:=\left(x_1, y\right)$,  where $y=\left(x_2,\dots, x_n\right)$.

Let $u$ satisfies
\begin{equation}\label{bu}
\Box u =0.
\end{equation}

Multiply \eqref{bu} by $\pa_t u $, we get the energy conservation law:
\begin{align}
\pa_t e\left(u\right) -\sum\limits_{i=1}^{n} \pa_{x_i} \left(\pa_t u \pa_{x_i}u\right) =0.
\end{align}
Integrate in $y$, we get:
\begin{align}\label{bc1}
\pa_t \int_{\mathbb{R}^{n-1}} e\left(u\right) dy - \pa_{x_1} \int_{\mathbb{R}^{n-1}}\left(\pa_tu\pa_{x_1}u\right)dy
=0.
\end{align}

Let $v$ satisfies
\begin{equation}\label{bv}
\Box v =0
\end{equation}
we rewrite  \eqref{bv} as 
\begin{equation} \label{rdh}
	\pa^2_t v-\pa^2_{x_1}v+\Delta_y v=2\Delta_y v.
\end{equation}

Multiply \eqref{rdh} by $\pa_{x_1}v$, and integrate in $y$, we get:
\begin{align}\label{bc2}
\pa_t \int_{\mathbb{R}^{n-1}} \left(\pa_tv\pa_{x_1}v\right) dy - \pa_{x_1} \int_{\mathbb{R}^{n-1}} e\left(v\right)dy= 2\int_{\mathbb{R}^{n-1}} \pa_{x_1}v \Delta_yvdy.
\end{align}

\begin{theorem}\label{ges}
	For solution $u$ and $v$ to the D' Alembertian equation with initial data in Schwartz class $\mathcal{S}(\mathbb{R}^n)$,  the we obtain a general estimate of the form:
	\begin{align}
		&\int_{0}^{1} \int_{\mathbb{R}} \frac{1}{4} E_1(u)E_2(v) + \frac{1}{4} E_1(v)E_2(u) + \frac{1}{8} D^+(u)D^-(v)+\frac{1}{8}D^-(u)D^+(v)dx_1dt\nonumber\\
		&\lesssim E(u)E(v) + E(u)\|\Delta_y v\|^2_{L^2\left([0,1]\times \mathbb{R}^n\right)},
	\end{align}
\end{theorem}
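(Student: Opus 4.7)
The strategy is to apply Lemma \ref{dcr} in the $(t,x_1)$-plane to data obtained by integrating the energy identity \eqref{bc1} for $u$ and the momentum-type identity \eqref{bc2} for $v$ over the transverse slices. Concretely, I set
\begin{align*}
f^{11}(t,x_1) &:= \int_{\mathbb{R}^{n-1}} e(u)\,dy, & f^{12}(t,x_1) &:= -\int_{\mathbb{R}^{n-1}} \pa_t u\,\pa_{x_1} u\,dy,\\
f^{21}(t,x_1) &:= \int_{\mathbb{R}^{n-1}} \pa_t v\,\pa_{x_1} v\,dy, & f^{22}(t,x_1) &:= \int_{\mathbb{R}^{n-1}} e(v)\,dy.
\end{align*}
Then \eqref{bc1} reads $\pa_t f^{11} + \pa_{x_1} f^{12}=0$, so $G^1\equiv 0$, while \eqref{bc2} reads $\pa_t f^{21} - \pa_{x_1} f^{22} = G^2$ with $G^2 := 2\int_{\mathbb{R}^{n-1}} \pa_{x_1} v\,\Delta_y v\,dy$. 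Schwartz decay of $u,v$ supplies the vanishing at infinity needed by Lemma \ref{dcr}.

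The next step is to identify $f^{11}f^{22}+f^{12}f^{21}$ with the quantity on the left of the theorem. Introducing null derivatives $L^{\pm}w := \pa_t w \pm \pa_{x_1} w$, the pointwise identities
\begin{equation*}
|\pa_t w|^2 + |\pa_{x_1}w|^2 = \tfrac{1}{2}\bigl(|L^+w|^2 + |L^-w|^2\bigr), \qquad \pa_t w\,\pa_{x_1}w = \tfrac{1}{4}\bigl(|L^+w|^2 - |L^-w|^2\bigr)
\end{equation*}
write each $f^{ij}$ in terms of the $y$-sliced quantities $E_1(w)=\int|\nabla_y w|^2\,dy$, $E_2(w)=\int(|\pa_t w|^2+|\pa_{x_1}w|^2)\,dy$, and $D^{\pm}(w)=\int|L^{\pm}w|^2\,dy$. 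Expanding the two products, the ``diagonal'' contributions $D^+(u)D^+(v)$ and $D^-(u)D^-(v)$ cancel between $f^{11}f^{22}$ and $f^{12}f^{21}$, leaving exactly the four-term combination on the LHS of the theorem plus a nonnegative remainder $\tfrac{1}{4}E_1(u)E_1(v)$ that can simply be dropped.

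For the right-hand side of Lemma \ref{dcr}, energy conservation \eqref{nlsh} together with $f^{11}\geq 0$ gives $\|f^{11}(t)\|_{L^1_{x_1}}=E(u)$ for every $t\in[0,1]$, while Cauchy--Schwarz in $x$ gives $\|f^{21}(t)\|_{L^1_{x_1}}\lesssim E(v)$. The $G^1$ contribution vanishes, and $G^2$ is estimated by Cauchy--Schwarz in $(t,x)$:
\begin{equation*}
\int_0^1\!\int_{\mathbb{R}}|G^2|\,dx_1\,dt \leq 2\|\pa_{x_1}v\|_{L^2([0,1]\times\mathbb{R}^n)}\|\Delta_y v\|_{L^2([0,1]\times\mathbb{R}^n)} \lesssim E^{1/2}(v)\|\Delta_y v\|_{L^2},
\end{equation*}
where the $\pa_{x_1}v$ factor is controlled by energy conservation on the unit time interval. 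Multiplying the two factors produced by Lemma \ref{dcr} and absorbing the cross term via $2ab\leq a^2+b^2$ yields the claimed bound $E(u)E(v)+E(u)\|\Delta_y v\|_{L^2}^2$. The one mildly technical step is the algebraic rearrangement identifying $f^{11}f^{22}+f^{12}f^{21}$ with the four-term expression; everything else is the div-curl lemma, energy conservation, and elementary inequalities.
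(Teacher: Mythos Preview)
Your approach is exactly the paper's: apply Lemma~\ref{dcr} to the pair of identities \eqref{bc1}--\eqref{bc2}, expand $f^{11}f^{22}+f^{12}f^{21}=E_1(u)E_1(v)-P(u)P(v)$ in the null variables $D^{\pm}$, drop a nonnegative remainder, and control the right side by energy conservation plus Cauchy--Schwarz on $G^2$. Two small slips worth fixing: you have swapped the paper's definitions of $E_1$ and $E_2$ (in the paper $E_1$ is the full sliced energy, $E_2$ the transverse gradient), and the nonnegative remainder is actually $\tfrac{1}{16}\bigl(D^+(u)+D^-(u)\bigr)E_2(v)+\tfrac{1}{16}\bigl(D^+(v)+D^-(v)\bigr)E_2(u)$, not $\tfrac14 E_2(u)E_2(v)$ --- but since it is nonnegative either way this does not affect the argument.
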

where some notations are defined as follows:
\begin{align*}
&D^{\pm}(f):=  \int_{\mathbb{R}^{n-1}} \left(\pa_t f \pm \pa_{x_1} f\right)^2dy, \quad P(f):= \int_{\mathbb{R}^{n-1}} \pa_tf \pa_{x_1}fdy\nonumber\\
&E_1(f):= \int_{\mathbb{R}^{n-1}} e(f)dy, \quad\ E_2(f):=\int_{\mathbb{R}^{n-1}} |\nabla_yf|^2 dy.
\end{align*}
\begin{proof}
Firstly, we use the div-curl lemma to the balence law  \eqref{bc1} and \eqref{bc2}
to obtain:
\begin{align}
& \int_{0}^{1}\int_{-\infty}^{+\infty} \left(\int_{\mathbb{R}^{n-1}} e\left(u\right)dy\int_{\mathbb{R}^{n-1}} e\left(v\right)dy\right)dx_1dt\nonumber\\
&-\int_{0}^{1}\int_{-\infty}^{+\infty}\left(\int_{\mathbb{R}^{n-1}}\pa_tu\pa_{x_1}udy\int_{\mathbb{R}^{n-1}} \pa_tv\pa_{x_1}vdy\right)d x_1 dt\nonumber\\
&\lesssim \left( \sup\limits_{t}\int_\mathbb{R} \lvert \int_{\mathbb{R}^{n-1}} e\left(u\right) dy\rvert dx_1\right)\cdot \left( \sup\limits_{t}\int_\mathbb{R} \lvert \int_{\mathbb{R}^{n-1}} \pa_tv\pa_{x_1}v dy\rvert dx_1\right)\nonumber\\
&+ \left( \sup\limits_{t}\int_\mathbb{R} \lvert \int_{\mathbb{R}^{n-1}} e\left(u\right) dy\rvert dx_1\right)\cdot\left(\int_{0}^{1} \int_\mathbb{R} \lvert \int_{\mathbb{R}^{n-1}} \pa_{x_1} v \Delta_y vdy\rvert dx_1dt\right)\nonumber\\
&\lesssim\left(\sup\limits_t \int_{\mathbb{R}^{n}} e\left(u\right)dx\right)\cdot\left(\sup\limits_t \int_{\mathbb{R}^{n}} \rvert \pa_tv\pa_{x_1}v \lvert dx\right)\nonumber\\
&+\left(\sup\limits_t \int_{\mathbb{R}^{n}} e\left(u\right)dx\right)\cdot\left(\int_{0}^{1}\int_{\mathbb{R}^{n}} \lvert  \Delta_y v\rvert^2 dxdt\right)\nonumber\\
&+\left(\sup\limits_t \int_{\mathbb{R}^{n}} e\left(u\right)dx\right)\cdot\left(\int_{0}^{1}\int_{\mathbb{R}^{n}} \lvert  \nabla v\rvert^2 dxdt\right)
\end{align}

It is obvious that
\begin{align*}
&e\left(f\right)=\frac{1}{4}\left(\left(\partial_t f + \partial_{x_1}f\right)^2+\left(\partial_t f - \partial_{x_1}f\right)^2\right)+\frac{1}{2}\lvert \nabla_y f \rvert^2,\\
&\partial_tf\partial_{x_1}f= \frac{1}{4}\left(\left(\partial_t f + \partial_{x_1}f\right)^2-\left(\partial_t f - \partial_{x_1}f\right)^2\right).\nonumber
\end{align*}

Based above, by some simple calculations, we have:
\begin{align}
&\int_{\mathbb{R}^{n-1}} e\left(u\right)\int_{\mathbb{R}^{n-1}} e\left(v\right)-\int_{\mathbb{R}^{n-1}}\pa_t u\pa_{x_1} u\int_{\mathbb{R}^{n-1}} \pa_t v \pa_{x_1} v\nonumber\\
&=E_1(u)E_1(v)-P(u)P(v)\nonumber\\
&=\frac{1}{16}\left( D^+(u)+D^-(u)\right)\left( D^+(v)+D^-(v)\right)+\frac{1}{8}\left( D^+(u)+D^-(u)\right) E_2(v)\nonumber\\
&+\frac{1}{8}\left(D^+(v)+D^-(v)\right)E_2(u)-\frac{1}{16}\left( D^+(u)-D^-(u)\right)\left( D^+(v)-D^-(v)\right)\nonumber\\
&+ \frac{1}{4} E_2(u)E_2(v)\nonumber\\
&=\frac{1}{8} \left( D^+(u)+D^-(u)\right) E_2(v)+\frac{1}{8} \left( D^+(v)+D^-(v)\right) E_2(u)\nonumber\\
&+\frac{1}{4} E_2(u)E_2(v)\nonumber\\
&+\frac{1}{8}D^+(u)D^-(v)+\frac{1}{8}D^-(v)D^+(u)\nonumber\\
&=\frac{1}{4} E_1(u)E_2(v)+\frac{1}{4} E_1(v)E_2(u)\nonumber\\
&+\frac{1}{8}D^+(u)D^-(v)+\frac{1}{8}D^-(u)D^+(v)\nonumber\\
&+\frac{1}{16} \left( D^+(u)+D^-(u)\right) E_2(v)+\frac{1}{16} \left( D^+(v)+D^-(v)\right) E_2(u)\nonumber
\end{align}

Combining above and noting \eqref{nlsh}, we finish our proof.
\end{proof}

\section{Proof of bilinear null form estimates}
We first prove Theorem	\ref{wbes}.
\begin{proof}
We first consider the case   for $Q=Q_0$. 
Let 
\begin{align}\label{fj}
\phi_{\mu} = \sum\limits_{i=1}^{c\mu^{\frac{n-1}{2}}} \phi_{\mu, \omega_i},
\end{align}
where $\phi_{\mu, \omega_i}$ are real functions.  Moreover, $\hat{\phi}_{\mu, \omega_i}\left(\xi\right):= Q^{\mu}_{\omega_i}\left(\frac{\xi}{|\xi|}\right) \hat{\phi}_\mu\left(\xi\right)$, where  $Q^{\mu}_{\omega_i}\left(\frac{\xi}{|\xi|}\right)$ is the  partition of unity on sphere statisfying $\sum\limits^{c\mu^{\frac{n-1}{2}}}_{i=1}Q^{\mu}_{\omega_i}= 1$, and in the support of $Q^{\mu}_{\omega_i}$ 
\begin{align*}
\sum\limits^{n}_{j,k =1} |\omega_i^{(k)}\xi_j -\omega_i^{(j)}\xi_k| \lesssim  \mu^{-1/2}\lvert \xi \rvert .
\end{align*}
So in the support of $\hat{\phi}_{\mu, \omega_i}\left(\xi\right)$, we have
\begin{align}\label{wpd}
	\sum\limits^{n}_{j,k =1} |\omega_i^{(k)}\xi_j -\omega_i^{(j)}\xi_k| \lesssim  \lvert \xi \rvert^{1/2} .
\end{align}

Here, we use the notations $\omega_i^{(l)}$ to denote the $l$ component of the vector $\omega_i:=(\omega_i^{(1)}, \dots, \omega_i^{(l)}, \dots \omega_i^{(n)})$ and  $\xi_l$  to denote the $l$ component of the vector $\xi: =(\xi_1, \dots, \xi_l, \dots \xi_n)$.
So $\hat{\phi}_{\mu, \omega_i}$ is concentrated both in $\omega_i$ and $-\omega_i$ directions and is a real function. In the paper of Klainerman, Rodnianski and Tao \cite{Klainerman-Rodnianski-Tao}, they also considered the decomposition of solution in difference direction $\omega_i$ 's.
Thus, from the decomposition \eqref{fj}, we have: \begin{align}
\|Q_0\left(\phi_\mu, \psi_\lambda\right)\|_{L^2([0,1]\times\mathbb{R}^n)}\lesssim
\sum\limits_{i=1 }^{c\mu^{\frac{n-1}{2}}}  \|Q_0\left(\phi_{\mu, \omega_i}, \psi_\lambda\right)\|_{L^2([0,1]\times\mathbb{R}^n)}.
\end{align}    
It is enough to prove:
\begin{align}\label{fenes}
	I_{\omega_i}:= 	\|Q_0\left(\phi_{\mu,\omega_i}, \psi_\lambda\right)\|_{L^2([0,1]\times\mathbb{R}^n)} \lesssim \mu^{\frac{n-1}{4}}E^{1/2}\left(\phi_{\mu,\omega_i}\right)E^{1/2}\left(\psi_{\lambda}\right).
\end{align}

Because by \eqref{fenes}, we can get:
\begin{align}
	&\|Q_0\left(\phi_\mu, \psi_\lambda\right)\|_{L^2([0,1]\times\mathbb{R}^n)}\nonumber\\
	&\lesssim\mu^{\frac{n-1}{4}}\left(\sum\limits_{i=1 }^{c\mu^{\frac{n-1}{2}}} 1\right)^{1/2}\left(\sum\limits_{i=1} E\left(\phi_{\mu, \omega_i}\right)\right)^{1/2}E^{1/2}\left(\psi_{\lambda}\right)\nonumber\\
	&\lesssim \mu^{\frac{n-1}{2}}E^{1/2}\left(\phi_{\mu}\right)E^{1/2}\left(\psi_{\lambda}\right).
\end{align}

To prove \eqref{fenes}, by rotational invarience, without loss of generality, we can assume $\omega_i=e_1$. 

It is not difficult to know $\phi_{\mu}$, $\psi_{\lambda}$, $\phi_{\mu,e_1}$ all  statify the D' Alembertian equation. So  we apply the theorem \ref{ges}  to  $\psi_{\lambda} $ and $\phi_{\mu,e_1}$,  and noting \eqref{nlsh}, we have:
\begin{align}\label{hxg}
	&\int_{0}^{1} \int_{\mathbb{R}} \frac{1}{4} E_1(\phi_{\mu,e_1})E_2(\psi_{\lambda}) + \frac{1}{4} E_1(\psi_{\lambda})E_2(\phi_{\mu,e_1}) + \frac{1}{8} D^+(\phi_{\mu,e_1})D^-(\psi_{\lambda})+\frac{1}{8}D^-(\phi_{\mu,e_1})D^+(\psi_{\lambda})dx_1dt\nonumber\\
&\lesssim E(\phi_{\mu,e_1})E(\psi_{\lambda}) +  E(\psi_{\lambda})\|\Delta_y \phi_{\mu,e_1}\|^2_{L^2\left([0,1]\times \mathbb{R}^n\right)},\nonumber\\
&\lesssim E(\phi_{\mu,e_1})E(\psi_{\lambda})+ E(\psi_{\lambda})\|\nabla \phi_{\mu,e_1}\|^2_{L^2\left([0,1]\times \mathbb{R}^n\right)},\nonumber\\
&\lesssim E(\phi_{\mu,e_1})E(\psi_{\lambda})
\end{align}

Here, we use \eqref{wpd}, so two derivative in $y$ is like one derivatives in $x_1$ and $\|\Delta_y \phi_{\mu,e_1}\|_{L^2} \lesssim \|  \nabla \phi_{\mu, e_1}\|_{L^2}$. More precisely, 
 denoting $\xi=(\xi_1, \xi^{'})$ and taking $\omega_i=e_1$ in \eqref{wpd}, we obatin:
\begin{align}\label{y1x}
	\int \lvert \Delta_y \phi_{\mu, e_1} \rvert^2 dx =& \int |\xi ^ {'}|^4 \lvert \hat{\phi}_{\mu, e_1} \rvert^2 d\xi \nonumber\\
	&\lesssim \int |\xi|^2  \lvert \hat{\phi}_{\mu, e_1} \rvert^2 d\xi \sim \int |\nabla \phi_{\mu, e_1}|^2 d\xi,
\end{align}
this is the reason why we put the term $2 \Delta_y \phi_{\mu, e_1}$ on the right hand side as an inhomogeous term.

So we have:
\begin{equation}
	I_{e_1}\leq \left(\int_{0}^{1}\int_{-\infty}^{+\infty} h^2_{e_1}\left(t,x_1\right)dtdx_1\right)^{1/2},
\end{equation}
where
\begin{align}
h_{e_1}&=\|\pa_t \psi_{\lambda}-\pa_{x_1}\psi_{\lambda}\|_{L^2_y}\|\pa_t\phi_{\mu, e_1}+\pa_{x_1}\phi_{\mu, e_1}\|_{L^{\infty}_y}\nonumber\\
&+\|\pa_t\psi_{\lambda}+\pa_{x_1}\psi_{\lambda}\|_{L^2_y}\|\pa_t\phi_{\mu, e_1}-\pa_{x_1}\phi_{\mu, e_1}\|_{L^{\infty}_y}\nonumber\\
&+\|\nabla_y\psi_{\lambda}\|_{L^2_y}\|\nabla_y\phi_{\mu, e_1}\|_{L^{\infty}_y}
\end{align}

Taking $\omega_i=e_1$ in \eqref{wpd}, we find two derivative in $y$ is like one derivatives in $x_1$. By Bernstein inequality, we get:
\begin{align}
	\lvert h_{e_1}\rvert \lesssim& \mu^{\frac{n-1}{4}}\|\pa_t \psi_{\lambda}-\pa_{x_1}\psi_{\lambda}\|_{L^2_y}\|\pa_t\phi_{\mu, e_1}+\pa_{x_1}\phi_{\mu, e_1}\|_{L^{2}_y}\nonumber\\
	&+\mu^{\frac{n-1}{4}}\|\pa_t\psi_{\lambda}+\pa_{x_1}\psi_{\lambda}\|_{L^2_y}\|\pa_t\phi_{\mu, e_1}-\pa_{x_1}\phi_{\mu, e_1}\|_{L^{2}_y}\nonumber\\
	&+\mu^{\frac{n-1}{4}}\|\nabla_y\psi_{\lambda}\|_{L^2_y}\|\nabla_y\phi_{\mu, e_1}\|_{L^{2}_y}.
\end{align}

Combining \eqref{hxg}, we get:
\begin{align}
	I_{e_1}\lesssim \mu^{\frac{n-1}{4}} E^{1/2}(\phi_{\mu,e_1})E^{1/2}(\psi_{\lambda}),
\end{align}

this proves \eqref{fenes}. 

For $Q_{\alpha, \beta}$, we have:
\begin{align}
	\left(\sum\limits_{i,j=1}^{n} \|Q_{i,j}\left(\phi_{\mu},\psi_{\lambda}\right)\|^2_{L^2} +\sum\limits_{i}^{n}\|Q_{0,i}\left(\phi_{\mu},\psi_{\lambda}\right)\|^2_{L^2}\right)^{1/2}
\end{align}
is rotationally equivalent, so we only need to estimate
\begin{align}
	\left(\sum\limits_{i,j}^{n} \|Q_{i,j}\left(\phi_{\mu,e_1},\psi_{\lambda}\right)\|^2_{L^2} +\sum\limits_{i}^{n}\|Q_{0,i}\left(\phi_{\mu,e_1},\psi_{\lambda}\right)\|^2_{L^2}\right)^{1/2},
\end{align}
the rest of proof is completely paralell to the case of $Q_0$.

Thus, the proof of Theorem \ref{wbes} is completed. 
\end{proof}

Then we consider the Theorem \ref{dwbes}.
\begin{proof}
	
If $\mu \ll \lambda$, we have: 
\begin{align*}
\|\left(-\Delta\right)^{-1/2}Q_{i, j}\left(\phi_\mu, \psi_\lambda\right)\|_{L^2([0,1]\times\mathbb{R}^n)}\lesssim \lambda^{-1}\|Q_{i,j}\left(\phi_\mu, \psi_\lambda\right)\|_{L^2([0,1]\times\mathbb{R}^n)}.
\end{align*}

So Theorem \ref{dwbes} follows from Theorem \ref{wbes}. 

We note that $Q_{i,j}(\phi_{\mu}, \psi_\lambda) =\pa_i (\phi_{\mu} \pa_j\psi_{\lambda}) -\pa_j (\phi_{\mu}\pa_i\psi_\lambda)$.  Then by Bernstein inequality and H\"older  inequality,  if $\mu \sim \lambda$, we have:
\begin{align*}
	\|\left(-\Delta\right)^{-1/2}Q_{i,j}\left(\phi_\mu, \psi_\lambda\right)\|_{L^2([0,1]\times\mathbb{R}^n)}\lesssim \lambda \|\phi_{\mu}\|_{L^4([0,1]\times\mathbb{R}^n)} \|\psi_\lambda\|_{L^4([0,1]\times\mathbb{R}^n)}.
\end{align*}

By the following  Strichartz inequality,
\begin{equation}\label{1}
	\|\phi_{\mu}\|_{L^4([0,1]\times\mathbb{R}^n)} \lesssim \mu^{\frac{n-3}{4}-\frac{1}{2}} E^{1/2}\left(\phi_{\mu}\right),
\end{equation}
\begin{equation}\label{2}
\|\psi_{\lambda}\|_{L^4([0,1]\times\mathbb{R}^n)} \lesssim \lambda^{\frac{n-3}{4}-\frac{1}{2}} E^{1/2}\left(\psi_{\lambda}\right),
\end{equation}

we can obtain:
\begin{align}
\|\left(-\Delta\right)^{-1/2}Q_{i,j}\left(\phi_\mu, \psi_\lambda\right)\|_{L^2([0,1]\times\mathbb{R}^n)}&\lesssim \lambda \|\phi_{\mu}\|_{L^4([0,1]\times\mathbb{R}^n)} \|\psi_\lambda\|_{L^4([0,1]\times\mathbb{R}^n)}\nonumber\\
&\lesssim  \mu^{\frac{n-3}{2}}E^{1/2}\left(\phi_{\mu}\right)E^{1/2}\left(\psi_{\lambda}\right)
\end{align}
 for $\lambda \sim \mu$.
Thus, the proof is done.
\end{proof}

Particuly, from the Theorem \ref{wbes} and Theorem \ref{dwbes}, we can derive two corollaries  as follows:
\begin{corollary}\label{th1}
	If we assume that functions $\phi$ and $\psi$ satisfy the \eqref{main1} and \eqref{main2} respectively,  we have:
	\begin{align}
	\| Q\left(\phi, \psi\right)\|_{L^2\left([0,1\right], \dot{B}_{2, 1}^{\frac{n-1}{2}} \left(\mathbb{R}^n\right))} \lesssim\| (\phi_0,\phi_1)\|_{\dot{B}_{2, 1}^{\frac{n+1}{2}}\times \dot{B}_{2, 1}^{\frac{n-1}{2}}} \|(\psi_0, \psi_1)\|_{\dot{B}_{2, 1}^{\frac{n+1}{2}}\times \dot{B}_{2, 1}^{\frac{n-1}{2}}}.
	\end{align}
	for any of null forms $Q_0$  and $Q_{\alpha, \beta}$.   
\end{corollary}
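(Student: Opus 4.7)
The plan is to deduce the Besov-type bilinear estimate from the dyadic bounds in Theorems \ref{wbes} and \ref{dwbes} via a standard Littlewood--Paley decomposition, exploiting the $\ell^1$-structure of $\dot{B}^s_{2,1}$. First, by Minkowski's inequality (swapping the outer $L^2_t$ with the inner $\ell^1$ Besov sum),
$$
\|Q(\phi,\psi)\|_{L^2([0,1],\dot{B}^{(n-1)/2}_{2,1})}\leq \sum_\nu \nu^{(n-1)/2}\|P_\nu Q(\phi,\psi)\|_{L^2([0,1]\times\mathbb{R}^n)}.
$$
Expanding $\phi=\sum_\mu \phi_\mu$ and $\psi=\sum_\lambda \psi_\lambda$ and invoking bilinearity, a standard paraproduct analysis of the Fourier support restricts each nonzero $P_\nu Q(\phi_\mu,\psi_\lambda)$ to one of three regimes: high-low ($\mu\sim\nu\gg\lambda$), low-high ($\lambda\sim\nu\gg\mu$), or high-high ($\mu\sim\lambda\gtrsim\nu$).

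In the low-high and high-low regimes, the sum over $\nu$ collapses to a single scale, and Theorem \ref{wbes} yields directly
$$
\nu^{(n-1)/2}\|P_\nu Q(\phi_\mu,\psi_\lambda)\|_{L^2}\lesssim \max(\mu,\lambda)^{(n-1)/2}\min(\mu,\lambda)^{(n-1)/2}E^{1/2}(\phi_\mu)E^{1/2}(\psi_\lambda).
$$
In the high-high regime $\mu\sim\lambda$, we combine Theorem \ref{wbes} with the geometric sum $\sum_{\nu\lesssim\lambda}\nu^{(n-1)/2}\lesssim \lambda^{(n-1)/2}$ to obtain the same bound, namely $\lambda^{(n-1)/2}\cdot\mu^{(n-1)/2}E^{1/2}(\phi_\mu)E^{1/2}(\psi_\lambda)$. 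For the null forms $Q_{ij}$ with $n\geq 3$, Theorem \ref{dwbes} provides an alternative with a derivative gain that is not strictly required here but would simplify the high-high analysis.

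Third, summing over the dyadic indices $(\mu,\lambda)$ uses the positivity of all terms and factorizes
$$
\sum_{\mu,\lambda}\mu^{(n-1)/2}\lambda^{(n-1)/2}E^{1/2}(\phi_\mu)E^{1/2}(\psi_\lambda) = \Bigl(\sum_\mu \mu^{(n-1)/2}E^{1/2}(\phi_\mu)\Bigr)\Bigl(\sum_\lambda \lambda^{(n-1)/2}E^{1/2}(\psi_\lambda)\Bigr).
$$
Because $E^{1/2}(\phi_\mu)\sim \mu\|P_\mu\phi_0\|_{L^2}+\|P_\mu\phi_1\|_{L^2}$ by energy conservation, each factor coincides with $\|\phi_0\|_{\dot{B}^{(n+1)/2}_{2,1}}+\|\phi_1\|_{\dot{B}^{(n-1)/2}_{2,1}}$ (respectively for $\psi$), yielding the claim.

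The main obstacle is the high-high interaction: without the Fourier-support restriction $\nu\lesssim\lambda$, the sum $\sum_\nu \nu^{(n-1)/2}$ would diverge in the outer Besov summation, and a naive application of Theorem \ref{wbes} would incur a logarithmic loss. The geometric decay of the restricted sum is essential, and the double summation over $(\mu,\lambda)$ is absorbable only thanks to the $\ell^1$-structure of $\dot{B}^s_{2,1}$; the analogous $\ell^2$-Sobolev target would produce a genuine logarithmic divergence at the critical regularity, which is precisely the ``Besov loss'' alluded to in the introduction.
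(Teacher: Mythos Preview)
Your proof is correct and follows essentially the same route as the paper: a paraproduct decomposition into low-high, high-low, and high-high interactions, Theorem~\ref{wbes} applied to each dyadic piece, and summation via the $\ell^1$-structure of $\dot B^{(n-1)/2}_{2,1}$. Your write-up is in fact more explicit than the paper's in justifying the Minkowski swap of $L^2_t$ with the Besov $\ell^1$-sum, the geometric sum $\sum_{\nu\lesssim\lambda}\nu^{(n-1)/2}\lesssim\lambda^{(n-1)/2}$ in the high-high regime, and the identification $\sum_\mu \mu^{(n-1)/2}E^{1/2}(\phi_\mu)\sim \|(\phi_0,\phi_1)\|_{\dot B^{(n+1)/2}_{2,1}\times\dot B^{(n-1)/2}_{2,1}}$; the reference to Theorem~\ref{dwbes} is unnecessary here (the paper does not use it for this corollary) but harmless.
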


\begin{corollary}\label{th2}
	Let $n=3$. If we assume that functions $\phi$ and $\psi$ satisfy the \eqref{main1} and \eqref{main2} respectively,  we have:
	\begin{align}
	\|\left(-\Delta\right)^{-\frac{1}{2}} Q_{i,j}\left(\phi, \psi\right)\|_{L^2\left(\left[0,1\right] \times \mathbb{R}^3\right)} \lesssim\| (\phi_0,\phi_1)\|_{\dot{B}_{2, 1}^{1}\times \dot{B}_{2, 1}^{0} }\|(\psi_0, \psi_1)\|_{\dot{B}_{2, 1}^{1}\times \dot{B}_{2, 1}^{0}}.
	\end{align}
	for    $1 \leq i , j \leq 3$.
\end{corollary}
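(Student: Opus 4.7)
The plan is to obtain Corollary \ref{th2} from Theorem \ref{dwbes} via a Littlewood--Paley summation in which the three-dimensional setting eliminates the dyadic weight that otherwise would cause a derivative loss. First I would write the standard homogeneous decompositions $\phi=\sum_\mu \phi_\mu$ and $\psi=\sum_\lambda \psi_\lambda$ and use bilinearity together with the triangle inequality in $L^2([0,1]\times\mathbb{R}^3)$ to reduce matters to estimating
\[
\sum_{\mu,\lambda}\bigl\|(-\Delta)^{-1/2}Q_{i,j}(\phi_\mu,\psi_\lambda)\bigr\|_{L^2([0,1]\times\mathbb{R}^3)}.
\]

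The second step exploits the antisymmetry $Q_{i,j}(\phi_\mu,\psi_\lambda)=-Q_{i,j}(\psi_\lambda,\phi_\mu)$, which allows us to restrict to the region $\mu\le\lambda$ (the diagonal is absorbed into either sum). Invoking Theorem \ref{dwbes} with $n=3$, the weight $\mu^{(n-3)/2}$ collapses to $1$, so each summand is dominated by $E^{1/2}(\phi_\mu)E^{1/2}(\psi_\lambda)$. This is the decisive point where the restriction to $n=3$ is used in the proposal.

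The third step identifies the dyadic energies with the Besov norms on the right-hand side. By Plancherel together with the conservation law \eqref{nlsh},
\[
E^{1/2}(\phi_\mu)\sim \mu\,\|\phi_{0,\mu}\|_{L^2}+\|\phi_{1,\mu}\|_{L^2},
\]
where $\phi_{0,\mu}$ and $\phi_{1,\mu}$ denote the dyadic pieces of the Cauchy data. Summing in $\mu$ and invoking the definition of the $\ell^1$-based Besov norms yields $\sum_\mu E^{1/2}(\phi_\mu)\lesssim \|\phi_0\|_{\dot{B}^1_{2,1}}+\|\phi_1\|_{\dot{B}^0_{2,1}}$, and the analogous bound for $\psi$. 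Bounding $\sum_{\mu\le\lambda}E^{1/2}(\phi_\mu)E^{1/2}(\psi_\lambda)$ by the product $\bigl(\sum_\mu E^{1/2}(\phi_\mu)\bigr)\bigl(\sum_\lambda E^{1/2}(\psi_\lambda)\bigr)$ then produces the desired bilinear estimate in Besov spaces.

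The main obstacle, insofar as there is one, is conceptual rather than technical: one must observe that in three space dimensions the exponent in Theorem \ref{dwbes} vanishes, and this vanishing is precisely what allows the $\ell^1$-summation to close without a logarithmic or derivative loss and hence gives the clean Besov estimate. All the genuinely hard analysis --- the div-curl structure, the angular partition, and the Strichartz input used for the diagonal piece $\mu\sim\lambda$ --- has already been carried out in the proof of Theorem \ref{dwbes}, so the corollary reduces to an organized summation once that theorem is in hand.
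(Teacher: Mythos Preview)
Your proposal is correct and follows essentially the same route as the paper: decompose $\phi$ and $\psi$ into dyadic pieces, invoke Theorem~\ref{dwbes} (which for $n=3$ gives the weight $\mu^{0}=1$) on each pair with $\mu\le\lambda$, use the antisymmetry of $Q_{i,j}$ for the opposite ordering, and then sum using the $\ell^{1}$ Besov structure. The only cosmetic difference is that the paper re-splits the sum into $\mu\ll\lambda$ and $\mu\sim\lambda$ and records the sharper off-diagonal factor $\lambda^{-1}\mu$ coming from Theorem~\ref{wbes}, but this extra gain is not needed for the stated $\dot B^{1}_{2,1}\times\dot B^{0}_{2,1}$ bound, so your uniform application of Theorem~\ref{dwbes} is a clean and valid simplification.
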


Here, we prove the Corollary \ref{th1}, and  Corollary \ref{th2} by Theorem \ref{wbes}, and Theorem \ref{dwbes}.

\begin{proof}
	We first prove Corollary \ref{th1}. From Theorem \ref{wbes}, we have:
	\begin{align*}
		\| Q\left(\phi, \psi\right)\|_{\dot{B}^{\frac{n-1}{2}}_{2,1} \left(\mathbb{R}^n\right)} &= \sum\limits_{\lambda}  \lambda^{\frac{n-1}{2}} \|P_\lambda Q\left(\phi, \psi\right)\|_{L^2 \left(\mathbb{R}^n\right)}\nonumber\\
		&\lesssim \sum\limits_{\lambda}\sum\limits_{\mu <<\lambda } \lambda^{\frac{n-1}{2}} \| Q\left(\phi_\mu, \psi_\lambda\right)\|_{L^2 \left(\mathbb{R}^n\right)} + \sum\limits_{\lambda}\sum\limits_{\mu <<\lambda } \lambda^{\frac{n-1}{2}} \| Q\left(\phi_\lambda,  \psi_{\mu}\right)\|_{L^2 \left(\mathbb{R}^n\right)}\nonumber\\
		&+\sum\limits_{\lambda}\sum\limits_{c^{-1}\lambda \leq \sigma, \sigma\sim\sigma^{'}} \lambda^{\frac{n-1}{2}} \| Q\left(\phi_\sigma,  \psi_{\sigma^{'}}\right)\|_{L^2 \left(\mathbb{R}^n\right)}\nonumber\\
		&\lesssim \sum\limits_{\lambda} \sum\limits_{\mu <<\lambda }\lambda^{\frac{n-1}{2}}\mu^{\frac{n-1}{2}}E^{1/2}\left(\phi_{\mu}\right)E^{1/2}\left(\psi_{\lambda}\right) + \sum\limits_{\lambda} \sum\limits_{\mu <<\lambda }\lambda^{\frac{n-1}{2}}\mu^{\frac{n-1}{2}}E^{1/2}\left(\phi_{\lambda}\right)E^{1/2}\left(\psi_{\mu}\right)  \nonumber\\
		&+\lesssim \sum\limits_{\lambda } \lambda^{\frac{n-1}{2}}\sum\limits_{c^{-1}\lambda \leq \sigma, \sigma\sim\sigma^{'}}\sigma^{'\frac{n-1}{2}}E^{1/2}\left(\phi_{\sigma^{'}}\right)E^{1/2}\left(\psi_{\sigma^{'}}\right).\nonumber\\
        &\lesssim \| (\phi_0,\phi_1)\|_{\dot{B}_{2, 1}^{\frac{n+1}{2}}\times \dot{B}_{2, 1}^{\frac{n-1}{2}}} \|(\psi_0, \psi_1)\|_{\dot{B}_{2, 1}^{\frac{n+1}{2}}\times \dot{B}_{2, 1}^{\frac{n-1}{2}}} 
        \end{align*}

So we obtain:
\begin{equation}
	\| Q\left(\phi, \psi\right)\|_{L^2\left([0,1\right], \dot{B}_{2, 1}^{\frac{n-1}{2}} \left(\mathbb{R}^n\right))} \lesssim \| (\phi_0,\phi_1)\|_{\dot{B}_{2, 1}^{\frac{n+1}{2}}\times \dot{B}_{2, 1}^{\frac{n-1}{2}}} \|(\psi_0, \psi_1)\|_{\dot{B}_{2, 1}^{\frac{n+1}{2}}\times \dot{B}_{2, 1}^{\frac{n-1}{2}}}.
\end{equation}

Then we prove Corollary \ref{th2}. 

From Theorem \ref{dwbes}, we have:
\begin{align*}
	\|\left(-\Delta\right)^{-\frac{1}{2}}Q_{i,j}\left(\phi, \psi\right)\|_{L^2\left(\mathbb{R}^3\right)}&\lesssim \sum\limits_{\lambda}\sum\limits_{ \mu <<\lambda}\lambda^{-1}\|Q_{i,j}\left(\phi_\mu, \psi_\lambda\right)\|_{L^2\left(\mathbb{R}^3\right)} \nonumber\\
	&+\sum\limits_{\lambda}\sum\limits_{\mu \sim \lambda}\|\left(-\Delta\right)^{-\frac{1}{2}}Q_{i,j}\left(\phi_\mu, \psi_\lambda\right)\|_{L^2\left(\mathbb{R}^3\right)}\nonumber\\
	&+\sum\limits_{\lambda}\sum\limits_{ \mu<<\lambda}\lambda
	^{-1}\|Q_{i,j}\left(\phi_\lambda, \psi_\mu\right)\|_{L^2\left(\mathbb{R}^3\right)} \nonumber\\
	&\lesssim \sum\limits_{\lambda}\sum\limits_{ \mu <<\lambda}\lambda^{-1}\mu E^{1/2}\left(\phi_{\mu}\right)E^{1/2}\left(\psi_{\lambda}\right) + \lambda^{-1}\mu E^{1/2}\left(\phi_{\lambda}\right)E^{1/2}\left(\psi_{\mu}\right)\nonumber\\
	&+\sum\limits_{\lambda}\sum\limits_{\mu \sim \lambda} E^{1/2}\left(\phi_{\mu}\right)E^{1/2}\left(\psi_{\lambda}\right)\nonumber\\
	&\lesssim \| (\phi_0,\phi_1)\|_{\dot{B}_{2, 1}^{1}\times \dot{B}_{2, 1}^{0} }\|(\psi_0, \psi_1)\|_{\dot{B}_{2, 1}^{1}\times \dot{B}_{2, 1}^{0}}.
\end{align*}
So we obtain:
\begin{equation}
		\|\left(-\Delta\right)^{-\frac{1}{2}} Q_{i,j}\left(\phi, \psi\right)\|_{L^2\left(\left[0,1\right] \times \mathbb{R}^3\right)} \lesssim \| (\phi_0,\phi_1)\|_{\dot{B}_{2, 1}^{1}\times \dot{B}_{2, 1}^{0} }\|(\psi_0, \psi_1)\|_{\dot{B}_{2, 1}^{1}\times \dot{B}_{2, 1}^{0}}.
\end{equation}
	
Compare to the classical result of Klainerman and Machdon \cite{Klainerman-Machedon}, Corollary \ref{th1} and Corollary \ref{th2}	 have a Besov loss.

\end{proof}

\par{\bf Acknowledgement}:

  The authors would like to thank Professor Sergiu Klainerman for helpful discussions and the anonymous referees for constructive criticism of our manuscript.
  
  This work was supported by the National Natural Science Foundation of China (No. 12171097), the Key Laboratory of Mathematics for Nonlinear Sciences (Fudan University), Ministry of Education of China, P.R.China. Shanghai Key Laboratory for Contemporary Applied Mathematics, School of Mathematical Sciences, Fudan University, P.R. China, and by Shanghai Science and Technology Program [Project No. 21JC1400600].


\begin{thebibliography}{10}
	
	
	
	
	\bibitem{Sterbenz-1}
	J.~Sterbenz and D.~Tataru.
	\newblock Energy dispersed large data wave maps in 2+1  dimensions.
	\newblock {\em Comm. Math. Phys.}, 298(1):139-210, 2010.
	\bibitem{Sterbenz-2}
	J.~Sterbenz and D.~Tataru.
	\newblock Regularity of wave-maps in dimension 2+1.
	\newblock {\em Comm. Math. Phys.}, 298(1):231-260, 2010.
	
	\bibitem{Klainerman-Machedon}
	S.~Klainerman and M.~Machedon.
	\newblock Space-time estimates for null forms and the local existence theorem.
	\newblock {\em Communications on Pure and Appied Mathematics}, XLVI:1221--1268,
	1993.
	
	\bibitem{Klainerman-Rodnianski-Szeftel-1}
	I.~Rodnianski, S.~Klainerman and J.~Szeftel.
	\newblock The bounded ${L}^2$  curvature conjecture.
	\newblock {\em Invent. Math.}, 202(1):91--216, 2015.
	
	\bibitem{Klainerman-Rodnianski-Szeftel-2}
	I.~Rodnianski, S.~Klainerman and J.~Szeftel.
	\newblock The resolution of the bounded ${L}^2$ curvature conjecture in general
	relativity.
	\newblock {\em Bull. Braz. Math. Soc. (N.S.)}, 47(2):445--456, 2016.
	
	\bibitem{Klainerman-Rodnianski-Tao}
	I.~Rodnianski, S.~Klainerman and Terence Tao.
	\newblock A physical space approach to wave equation bilinear estimates.
	\newblock {\em J. Anal. Math.}, 87:299--336, 2002.
	
	\bibitem{Tao-1}
	T.~Tao.
	\newblock Global regularity of wave maps.
	\uppercase\expandafter{\romannumeral1}. small critical sobolev norm in high
	dimension.
	\newblock {\em Internat. Math. Res. Notices}, (6):299--328, 2001.
	
	\bibitem{Tao-2}
	T.~Tao.
	\newblock Global regularity of wave maps.
	\uppercase\expandafter{\romannumeral2}. small energy in two dimensions.
	\newblock {\em Comm. Math. Phys.}, 224(2):443--544, 2001.
	
	\bibitem{Tataru-1}
	D.~Tataru.
	\newblock The wave maps equation.
	\newblock {\em Bull. Amer. Math. Soc.}, 41(2):185--204, 2004.
	
	\bibitem{Tataru-2}
	D.~Tataru.
	\newblock Rough solutions for the wave maps equation.
	\newblock {\em Amer. J. Math.}, 127(2):293--377, 2005.
	
	\bibitem{Wang-Zhou-1}
	S.~Wang and Y.~Zhou.
	\newblock Global well-posedness for radial extremal hypersurface equation in
	(1+3)-dimensional minkowski space-time in critical sobolev space.
	\newblock {\em arXiv:2212.08828}.
	
	\bibitem{Wang-Zhou-2}
	S.~Wang and Y.~Zhou.
	\newblock On proof of the wei-yue ding's conjecture for schr\"odinger map flow.
	\newblock {\em arXiv:2302.09969}.
	
	\bibitem{Zhou}
	Y.~Zhou.
	\newblock (1+2)-dimensional radially symmetric wave maps revisit.
	\newblock {\em arXiv:2302.09954}.
	
\end{thebibliography}
\end{document}